\author{J. Vandehey}
\title{On multiplicative functions with bounded partial sums}
\date{\today}
\newtheorem{thm}{Theorem}[section]
\newtheorem{lem}[thm]{Lemma}
\begin{document}

\maketitle

\begin{abstract}

Consider a multiplicative function $f(n)$ taking values on the unit circle.  Is it possible that the partial sums of this function are bounded?  We show that if we weaken the notion of multiplicativity so that $f(pn)=f(p)f(n)$ for all primes $p$ in some finite set $P$, then the answer is yes.  We also discuss a result of Bronstein that shows that functions modified from characters at a finite number of places.

\end{abstract}

\section{Introduction}

In number theory, quite commonly, we wish to understand the rate of growth of the partial sums of a multiplicative function $f(n)$: $$S(x)=S_f(x)=\sum_{n\le x} f(n).$$
In the case that $|f(n)| \le 1$, many theorems, especially those of Delange \cite{D1} \cite{D2}, Hall \cite{Hall}, and Halasz \cite{Halasz}, give upper bounds on the rate of growth.

The question of lower bounds on the rate of growth of $S(x)$ are less well-known.  It may be that $|S(x)|$ oscillates greatly, and so one is interested in finding a function $g(x)$ such that $|S(x)| \neq o(g(x))$ -- that is, $|S(x)| = \Omega(g(x))$.

Erdos and Chudakoff \cite{E1} \cite{E2} \cite{E3} independently asked the following, more general, question: given $f(n)$ with values in $\{-1,1\}$ (not necessarily multiplicative), is it true that for every $c$, there exist $d$ and $m$ so that $$\left| \sum_{k=1}^m f(kd) \right| >c?$$  This has since become known as the Erdos discrepancy problem (EDP), and Erdos offered \$500 for a proof or counter-example.

Recently, the EDP was the subject of a Polymath project, lead by Timothy Gowers.  Though the problem itself remained elusive, many interesting ideas were put forth, and we suggest the interested reader visit the Polymath website\footnote{http://michaelnielsen.org/polymath1/} for more information.

There are many variants of the EDP, but in this paper we are interested in the variant where $f(n)$ is multiplicative and takes values in $\mathbb{T}:=\{ z\in\mathbb{C}: |z|=1 \}$.  This is informally known as the Multiplicative Erdos discrepancy Problem (MEDP), as in \cite{C}.  

In an attempt to solve the MEDP, Kevin Ford suggested the following problem in private correspondence: Let $P$ be a subset of the prime numbers, and say that a function $f(n)$ is $P$-multiplicative if $f(pn)=f(p)f(n)$ for all $p\in P$.  Do there exist $P$-multiplicative functions $f:\mathbb{N}\to \mathbb{T}$ such that $|S_f(x)|=O(1)$? If not, then clearly no completely multiplicative function could have bounded partial sums, and this would solve the MEDP.  However, in this paper we prove the following theorem:

\begin{thm}
For every finite set of primes $P$, there exist $P$-multiplicative functions $f:\mathbb{N}\to \{-1,1\}$ with bounded partial sums.
\end{thm}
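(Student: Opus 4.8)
The plan is to exploit the fact that $P$-multiplicativity constrains $f$ only along multiplication by the primes of $P$, leaving an enormous amount of freedom. Write $Q=\prod_{p\in P}p$ and factor each $n$ uniquely as $n=dm$ with $d$ a $P$-smooth number (all prime factors in $P$) and $\gcd(m,Q)=1$. Then $P$-multiplicativity forces $f(n)=\chi(d)\,g(m)$, where $\chi(d)=\prod_{p\in P}f(p)^{v_p(d)}$ and $g$ is the restriction of $f$ to integers coprime to $Q$, which is completely unrestricted. Thus the data determining $f$ are the signs $f(p)\in\{\pm1\}$ and the free values $g(m)\in\{\pm1\}$. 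I would set $f(p)=-1$ for every $p\in P$, so that $\chi=\lambda$ (the Liouville function) on $P$-smooth numbers.

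First I would record the key reformulation. Summing over the rays $n=dm$,
\[
S_f(x)=\sum_{\substack{m\le x\\ \gcd(m,Q)=1}} g(m)\,A(x/m),\qquad A(Y)=\sum_{\substack{d\le Y\\ d\ P\text{-smooth}}}\lambda(d).
\]
The purpose of taking $f(p)=-1$ is to make the kernel $A$ \emph{bounded}. Indeed $\prod_{p\in P}(1+p^{-s})^{-1}=\big(\sum_{e\mid Q}\mu(e)e^{-s}\big)\big(\sum_{h\ P\text{-smooth}}h^{-2s}\big)$, so $A(Y)=\sum_{e\mid Q}\mu(e)\,\Pi(\sqrt{Y/e})$, where $\Pi$ counts $P$-smooth integers. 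Since $\Pi(z)$ is a polynomial in $\log z$ of degree $|P|$ up to a controlled error, the alternating divisor sum over $e\mid Q$ acts as a $|P|$-fold finite difference that annihilates the polynomial part, and a direct estimate gives $|A|\le C_P$.

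The base case $|P|=1$ is clean and reveals the mechanism. With $Q=p$ the integers coprime to $Q$ miss only the multiples of $p$, so consecutive free positions are separated by at most one forced position. Processing $n=1,2,3,\dots$ and choosing each free value $g(n)=-\operatorname{sign}\big(S_f(n-1)\big)$, one proves by induction that $|S_f|\le1$ right after each free step and $|S_f|\le2$ always, since at most one unit-size forced increment can occur before the next correction. This already settles the theorem for a single prime with the explicit bound $|S_f(x)|\le2$.

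For general $P$ I would run the same greedy correction in natural order: whenever $\gcd(n,Q)=1$ choose $g(n)$ to push $S_f$ toward $0$, all other values being already determined by smaller ones via $\chi(d)g(m)$ with $m=n/d<n$. The \textbf{main obstacle}, and the heart of the argument, is that the free positions now have gaps up to Jacobsthal's function $J(Q)$, so one correction need not offset a whole run of forced increments; worse, the relation $f(pn)=-f(n)$ propagates a biased block at scale $N$ into a biased block at scale $pN$, producing a cross-scale accumulation that the naive estimate only controls up to $O\big((\log x)^{|P|}\big)$ (this is precisely the total variation of $A(x/\cdot)$). I would resolve this not by the naive estimate but hierarchically: using $|A|\le C_P$ together with the positive density $\phi(Q)/Q$ of free positions, one shows that within each geometric block the accumulated carry-over from all coarser scales is $O_P(1)$, so a bounded number of sign choices per block keeps $S_f$ inside a fixed interval. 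Making this block-by-block cancellation precise --- i.e.\ proving that the greedy corrections absorb the variation of the kernel rather than lagging behind it --- is the crux, and is exactly where the interplay between the bounded smooth-Liouville kernel $A$ and the density of $Q$-coprime integers must be used quantitatively.
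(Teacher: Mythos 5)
Your reduction $S_f(x)=\sum_{(m,Q)=1}g(m)A(x/m)$ is correct, but the lemma your whole strategy rests on --- that the smooth--Liouville kernel satisfies $|A(Y)|\le C_P$ --- is false once $|P|\ge 2$. The identity $A(Y)=\sum_{e\mid Q}\mu(e)\Pi(\sqrt{Y/e})$ is fine, but the finite-difference operator it encodes only annihilates the \emph{polynomial} main term of $\Pi(z)$; the lattice-point error term is an oscillating quantity of order $(\log z)^{|P|-1}$ built from sawtooth functions of $\log z/\log p_i$, and differencing does not reduce it to $O(1)$, so the claimed ``direct estimate'' does not exist. One sees the failure concretely for $P=\{2,3\}$, $f(2)=f(3)=-1$: the bijection $(a,b)\mapsto(a+1,b+1)$ gives $A(Y)-A(Y/6)=\epsilon_2(Y)+\epsilon_3(Y)-1$ with $\epsilon_p(Y)=\tfrac12\bigl(1+(-1)^{\lfloor \log Y/\log p\rfloor}\bigr)$, whence
$$A(6^K)=A(1)+\tfrac12\sum_{k\le K}(-1)^{\lfloor k\theta_2\rfloor}+\tfrac12\sum_{k\le K}(-1)^{\lfloor k\theta_3\rfloor},\qquad \theta_2=\log 6/\log 2,\ \theta_3=\log 6/\log 3.$$
Each of these Birkhoff sums equals twice the discrepancy of the rotation by $\theta_i/2$ relative to the interval $[0,1/2)$, and since $1/2\notin(\theta_i/2)\mathbb{Z}+\mathbb{Z}$ (that would force $\theta_i$ rational), Kesten's theorem on bounded remainder sets says each is unbounded; nothing couples the continued fractions of $\theta_2$ and $\theta_3$ so as to make their sum bounded, and in any case the burden is on you to prove boundedness, which the sketch does not do. The quantity you need to be $O(1)$ is itself a partial sum of a completely multiplicative $\pm1$ function over smooth numbers --- essentially the ``smooth-number EDP,'' governed by the Diophantine properties of $\log 2/\log 3$ --- and your base case $|P|=1$ works only because there $A(Y)\in\{0,1\}$ trivially; that does not survive to two primes.

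Second, even granting a bounded kernel, the write-up stops at the decisive step: you say yourself that proving the greedy corrections ``absorb the variation of the kernel rather than lagging behind it'' is the crux, and no argument is supplied for it --- the only estimate actually justified is the trivial $O((\log x)^{|P|})$. So what you have is a plan, not a proof. For contrast, the paper never fixes $f(p)=-1$ and never tries to bound the smooth part in isolation: it introduces the primes one at a time and, inside each interval of length $b_j$ (a suitable multiple of $P$), builds an explicit injection (the relations $R_j$) pairing every ``forced'' value with a ``free'' one so that each interval sum is \emph{exactly} zero. In other words, the unboundedness of the smooth part is cancelled by correlating the free values $g(m)$ with it --- precisely the interaction your greedy scheme would need to produce but does not establish.
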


Paradoxically, the proof of this theorem gives more credence to the idea that completely multiplicative functions should not have bounded partial sums.  The particular $P$-multiplicative functions constructed in the proof have bounded partial sums, but as we add more elements to $P$, the bound on the sum appears to tend to infinity.

If we suppose that the partial sums of multiplicative functions are unbounded, then another interesting question would be to understand how slowly they could tend towards infinity.  The ideas of pretentiousness of Granville and Soundararajan \cite{GS1} \cite{GS2} say that if a multipliactive function $f(n)$ ``pretends'' to be another multiplicative function $g(n)$, in the sense that their values on the primes are close to each other, then one expects their partial sums to share similar characteristics.  This suggests that a multiplicative function with slowly growing partial sums might ``pretend'' to be a character, which truly does have bounded partial sums.  So, we say a function in $\mathbb{F}(\mathbb{T} \cup \{0\})$ is character-like if there exists a Dirichlet character $\chi(n)$ with conductor $P$ such that $f(n)=\chi(n)$ when $(n,P)=1$, and $f$ is itself not a Dirichlet character.

The following result was proved first by Bronstein \cite{B}; however, it appears to have been forgotten in the modern literature.  A weaker result appeared in a recent paper of Borwein, Choi, and Coons \cite{BCC}, and the current author, initially unaware of Bronstein's original paper, proved the same result, albeit by a less efficient method.

\begin{thm}
If $f$ is character-like then $$S_f(x)=\Omega(\log(x)).$$
\end{thm}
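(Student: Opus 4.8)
The plan is to exploit complete multiplicativity to peel off the ramified primes, reducing $S_f$ to a weighted sum of the (bounded) partial sums of $\chi$, and then to choose the argument $x$ so that these bounded pieces resonate rather than cancel.

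First I would record the structural identity. Since $f$ is completely multiplicative and agrees with $\chi$ on integers coprime to $P$, every $n$ factors as $n = ab$ with $a$ composed only of primes dividing $P$ and $(b,P)=1$, whence $f(n) = f(a)\chi(b)$. Because $f$ is not a character, there is at least one prime $p \mid P$ with $f(p)\neq 0$. Writing $T(y) = \sum_{b \le y}\chi(b)$, this gives
\[
S_f(x) = \sum_{a}f(a)\,T(x/a),
\]
the sum running over the $a$ supported on the ramified primes with $f(a)\neq 0$. The point is that $T$ is bounded and $P$-periodic with its sum over a full period equal to $0$, so $T(y) = U(\lfloor y\rfloor \bmod P)$ where $U(r) = \sum_{b=1}^{r}\chi(b)$; crucially $U(1) = \chi(1) = 1 \neq 0$, which furnishes a guaranteed nonzero direction.

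Next I treat the core case of a single ramified prime $p$, say $f(p) = e^{i\theta}$, where the identity collapses to
\[
S_f(x) = \sum_{k=0}^{K}f(p)^{k}\,U\!\left(\lfloor x/p^{k}\rfloor \bmod P\right), \qquad K \asymp \log x .
\]
When $P = p$ the residues $\lfloor x/p^{k}\rfloor \bmod p$ are exactly the base-$p$ digits $d_k$ of $x$, which I may prescribe freely. The idea is to choose them to \emph{resonate} with the rotating phase $f(p)^{k} = e^{ik\theta}$: selecting each $d_k$ to maximize $\Re(f(p)^{k}U(d_k))$ and using $U(1)=1$, $U(0)=0$, I get the termwise bound
\[
\Re\, S_f(x) \ \ge\ \sum_{k=0}^{K}\max\bigl(\cos(k\theta),0\bigr).
\]
The average of $\max(\cos\alpha,0)$ is $1/\pi>0$, so by Weyl equidistribution of $\{k\theta/2\pi\}$ when $\theta/2\pi$ is irrational, and by direct periodicity (the terms $k\equiv 0$ each contribute $1$) when it is rational, the right-hand side is $\ge c(f)\,K$ for some $c(f)>0$. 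Hence $|S_f(x)| \gg \log x$ along this explicit sequence, which is exactly $S_f(x) = \Omega(\log x)$.

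The main obstacle is the passage to a general conductor with several ramified primes. Here I would induct on the number of primes $p\mid P$ with $f(p)\neq 0$: setting $h(p)=0$ produces a character-like function with one fewer ramified prime, and splitting according to the power of $p$ dividing $n$ gives the recursion $S_f(x) = \sum_{k\ge 0} f(p)^{k} S_h(x/p^{k})$. The difficulty is that for composite $P$ the residue $\lfloor x/p^{k}\rfloor \bmod P$ now depends on several base-$p$ digits (a mixed-radix/CRT phenomenon), so the digits controlling consecutive terms overlap and I must show that the resonant selection can still be carried out without the inductive pieces cancelling. This bookkeeping—propagating a single guaranteed direction through the multidimensional digit structure—is the technical heart of the argument; the enabling fact throughout is that $U(1)=1$ never vanishes. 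I note in passing that the efficient route of Bronstein should instead read this off the Dirichlet series $F(s)=L(s,\chi)G(s)$, where $G(s)=\prod_{p\mid P,\,f(p)\neq 0}(1-f(p)p^{-s})^{-1}$ necessarily has a pole on the line $\Re s = 0$; when some $f(p)=1$ this pole sits at $s=0$ and a Tauberian argument gives the contradiction at once, but the constructive resonance above is what handles a pole at nonzero height.
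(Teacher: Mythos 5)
Your core case (conductor equal to a single prime $p$) is correct, and it is essentially the paper's own final step in different clothing: prescribing the base-$p$ digits of $x$ from $\{0,1\}$ so that the bounded periodic blocks $U(d_k)$ line up with the rotating phases $f(p)^k$, then harvesting a positive proportion of the $K\asymp\log x$ terms via equidistribution or periodicity of $k\theta$. The problem is the general conductor, and there the proposal has a genuine gap that you yourself flag as ``the technical heart'' without resolving it. Moreover, the inductive scheme you suggest is the wrong tool: an $\Omega(\log)$ lower bound for $S_h$ does not propagate through the infinite recursion $S_f(x)=\sum_{k\ge0}f(p)^k S_h(x/p^k)$, because the arguments $x/p^k$ are nested. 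You cannot choose $x$ so that each $x/p^k$ independently lands where $S_h$ is large and in phase, and nothing prevents the sum from cancelling; this is exactly the ``overlapping digits'' obstruction you describe, and it does not go away by bookkeeping.

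The missing idea is to reduce with a \emph{finite} linear combination rather than an infinite recursion. Fix one ramified prime $p$, let $P'$ be the part of $P$ coprime to $p$, and form a short M\"obius-type combination $S'(x)=\sum_{d\mid P'} c_d\, S_f(x/d)$ over the divisors of $P'$ whose summatory function is that of a multiplicative $f'$ vanishing on multiples of the primes dividing $P'$ and ramified only at $p$. Since the combination has boundedly many terms with bounded coefficients, $S_f(x)=o(\log x)$ would force $S'(x)=o(\log x)$; so it suffices to prove the lower bound for $f'$, which is your one-prime resonance argument run modulo $P$, with the selected powers of $p$ spaced by more than $v_p(P)$ so that each $\lfloor x/p^{m_i}\rfloor$ sits in a fixed residue class mod $P$. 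Two further cautions. First, after this reduction your guaranteed nonzero direction $U(1)=\chi(1)=1$ is no longer available, since the relevant period sum runs only over residues coprime to $P'$; one must actually exhibit a $k$ with $S'(kP')\neq 0$, which the paper isolates as a separate, nontrivial step. Second, the Dirichlet-series aside does not rescue the general case: a simple pole of $F(s)$ at $s=it_0\neq 0$ is compatible with $S_f(x)=o(\log x)$, since partial summation turns $|S_f(x)|\le\epsilon\log x+C_\epsilon$ only into $|F(\sigma+it_0)|\ll \epsilon\sigma^{-2}+C_\epsilon\sigma^{-1}$, which a first-order pole does not contradict.
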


It is curious that Bronstein's proof, that of Borwein, Choi, and Coons, and that of the author's followed such similar methods, despite being done independantly.  The proof can be quickly sketched as follows:
\begin{itemize}
\item Pick a prime $p|P$ and let $P'$ be the largest divisor of $P$ relatively prime to $p$.  Consider $$S'(x):=\sum_{d|P'} \mu(d) S_f(x/d) = \sum_{n\le x} f'(n)$$ where $f'$ is the multiplicative function defined by $f'(p)=f(p)$ on all primes not dividing $P'$ and $0$ elsewhere.  Assume $p$ was chosen so $f'$ is also not a Dirichlet character.  It is clear that if $S'(x)=\Omega(\log(x))$, then $S_f(x)=\Omega(\log(x))$.
\item Find an integer $k$ such that $S'(k P')\neq 0$.\footnote{It is in this step that Bronstein's proof was more efficient than the author's: Bronstein's $k$ was smaller.}
\item Construct an integer value $x = \sum_{i=1}^n k P' p^{m_i}$ for some appropriate choice of $m_i$'s and show that $S'(x) \gg n \gg \log(x)$.
\end{itemize} 

In particular, the first step reduces the problem to a function which differs from a character at only one prime, and the final step relies on this simplification, as $S'(x)$ is dependent solely on the base $P$ expansion of $x$.  In general, for any character-like $f(n)$, $S_f(x)$ can be written as a sum of at most $(\log x)^{\omega(P)}$ character sums, so has size at most $O((\log x)^{\omega(P)})$.  How close to this the lower bound can be is still an open question.

\section{Proof of the theorem}

Consider a set of primes $p_1< p_2< p_3<\ldots< p_k$, $k > 2$, and let $P= \prod_{1 \le i \le k} p_i$.  We wish to have an arithmetic function $f(n)$ such that $f(n) =\pm 1 $, $f(p_i n) = f(p_i) f(n)$ for $1 \le i \le k$, and $S_N := \sum_{n \le N} f(n)$ is bounded.

We will begin by demonstrating two specific cases.

\subsection{The case $k=2$, $p_1=2$, $p_2=3$}

Let $f(n)$ be a function such that $f(n) =\pm 1 $, $f(2 n) = f(2) f(n)$ and $f(3n)=f(3)f(n)$.  Such a function is completely determined by its values at $f(2)$, $f(3)$, and $f(6m+1),f(6m+5)$, $m \ge 0$. We will show that we can assign the values of $f$ at these points to create a function that is bounded.

First, let $f(1):=1$, $f(2):=1$, $f(3):=-1$, $f(5)=-1$.  Note that $$f(6n+1)+f(6n+2)+f(6n+3)=-(f(6n+4)+f(6n+5)+f(6n+6))=\pm 1\qquad (*)$$ for $n=0$.  I claim that if $f$ satisfies $(*)$ for all $n < N$ then we can specify values for $f(6N+1)$, $f(6N+5)$ so that the relation is true for $N$ as well.  Note that the values of $f(6N+2)$, $f(6N+3)$, $f(6N+4)$, $f(6N+6)$ are all predetermined by earlier values.

In particular, if $f(6N+2)=f(6N+3)$ then let $f(6N+1):=-f(6N+2)$; and if $f(6N+4)=f(6N+6)$ then let $f(6N+5):=-f(6N+4)$.  (These conditions guarantee $f(6N+1)+f(6N+2)+f(6N+3)=\pm 1$ and $f(6N+4)+f(6N+5)+f(6N+6)=\pm 1$.)  Now we have several cases to consider:
\begin{enumerate}
\item Case 1: if $f(6N+2)=f(6N+3)=-f(6N+4)=-f(6N+6)$, then the assigned values to $f(6N+1),f(6N+5)$ guarantee that $(*)$ holds.
\item Case 2: if $f(6N+2)=f(6N+3)$ but $f(6N+4)\neq f(6N+6)$, then let $f(6N+5):=-f(6N+2)$, which will guarantee that $(*)$ holds.
\item Case 3: if $f(6N+2)\neq f(6N+3)$ but $f(6N+4)= f(6N+6)$, then let $f(6N+1):= -f(6N+4)$, which will guarantee that $(*)$ holds.
\item Case 4: if $f(6N+2)\neq f(6N+3)$ and $f(6N+4)\neq f(6N+6)$, then let $f(6N+1):=1$, $f(6N+5)=-1$, which will guarantee that $(*)$ holds.
\item Case 5: the relation $f(6N+2)=f(6N+3)=f(6N+4)=f(6N+6)$ is impossible since $f(6N+2)+f(6N+4)+f(6N+6)=f(2)(f(3N+1)+f(3N+2)+f(3N+3))$, which, by assumption, is $\pm 1$.
\end{enumerate}

With all the needed values for $f(n)$ defined inductively in this way, we see that $$\sum_{n \le 6m} f(n) = 0$$ and hence $$\sum_{n\le x} f(n) = O(1).$$

\subsection{The case $k=3$, $p_1=2$, $p_2=3$, $p_3=5$}

This case is considerably more complicated.  Let $f(n)$ be a function such that $f(n) =\pm 1$ and $f(p_i n) = f(p_i) f(n)$ for $i=1,2,3$.  Such a function is again completely determined by its values at $f(2)$, $f(3)$, $f(5)$, and $f(30m+k)$ with $m \ge 0$ and $k=1,7,11,13,17,19,23,29$.

Suppose that $f(2):=-1$, $f(3):=-1$, $f(5):=1$, and for all $m\ge 0$ let the following relations hold: \begin{align*}f(30m+1):= -f(30m+3)&\qquad f(30m+7):=-f(30m+5) \\ f(30m+11):=-f(30m+9)&\qquad  f(30m+13):=-f(30m+15)\\ f(30m+19):=-f(30m+21)&\qquad  f(30m+23):=-f(30m+25)\\ f(30m+29)&:=-f(30m+27).\end{align*}

From this we can see that $\sum_{n=1}^{15} f(30m+(2n-1)) = f(30m+17)$ for $m\ge 0$.  $f(30m+17)$ is the only term of the form $f(30m+k)$ with $k=1,7,11,13,17,19,23,29$ whose value we have not yet specified.

It would be nice if most of the terms in $\sum_{n=1}^{30} f(30m+n)$ cancel with one another, as indeed, most of the odd terms do; while in this particular set-up, we have a lot of cancellation among these terms, in the more general case below, this will not be guaranteed, so we will assume there is little we can say about them.  Instead, we look at a larger interval.

Take $\sum_{n=1}^{60} f(30(2m)+n)$.  In this, we have the sum over all odd terms, except the terms $f(30(2m)+17)$ and $f(30(2m+1)+17)$, is zero, and all the terms congruent to $2$ modulo $4$ also sum to zero if we again exclude the term $f(30(2m)+2\cdot 17)$.  Again, we will assume there is little we can say about the terms divisible by $4$.  To get some additional cancellation, let us suppose that $f(30(2m)+17):=-f(30(2m)+2\cdot 17)$ for all $m\ge 0$.  Unless $m=0$ this defines $f(30(2m)+17)$ in terms of $f(30m+17)$, a much earlier term in the sequence, and when $m=0$, this merely says that $f(17):= - f(2) f(17)$ which holds since $f(2)=-1$.

Looking at the even longer sum $\sum_{n=1}^{120} f(30(4m)+n)$, all the odd terms except $f(30(4m+j)+17)$, $1\le j \le 4$, sum to zero.  All the terms divisible by 2 except for $f(30(4m+2j)+2\cdot 17)$, $1 \le j \le 2$ sum to zero.  All the terms divisible by $4$ except for $f(30(4m)+4\cdot 17)$ sum to zero as well.  However, our relation $f(30(2m)+17):=-f(30(2m)+2\cdot 17)$ implies automatically that $f(30(4m)+4\cdot 17)=-f(30(2m)+2\cdot 17)$.  We can visually represent the cancellations occurring with the following picture:
$$\xymatrix@!0@C=45pt{
30(4m)+17\ar@{-}[rd] & & 30(4m+1)+17\ar@{--}[ld] & & 30(4m+2)+17\ar@{-}[rd] & & 30(4m+3)+17\ar@{--}[ld]\\
 & 30(4m)+2\cdot17\ar@{-}[rrd] & & & & 30(4m+1)+2\cdot17\ar@{--}[lld] & \\
 & & & 30(4m)+4\cdot17
}$$
Bold lines represent that the value of the two connected terms have values that cancel with one another, dashed lines just help to show relative placement.

If we were to double the length of the interval again, we would obtain a diagram that looks like
$$\xymatrix@!0@C=25pt
{
{\bullet}\ar@{-}[rd] & & {\bullet}\ar@{--}[ld] & & {\bullet}\ar@{-}[rd] & & {\bullet}\ar@{--}[ld]& & {\bullet}\ar@{-}[rd] & & {\bullet}\ar@{--}[ld] & & {\bullet}\ar@{-}[rd] & & {\bullet}\ar@{--}[ld]\\
 & {\bullet}\ar@{-}[rrd] & & & & {\bullet}\ar@{--}[lld] & & & & {\bullet}\ar@{-}[rrd] & & & & {\bullet}\ar@{--}[lld] & \\
 & & & {\bullet}\ar@{-}[rrrrd] & & & & & & & & {\bullet}\ar@{--}[lllld]  \\
 & & & & & & & {\bullet}
}$$
Consider the sum of each set of numbers linked by bold lines.  If there are an even number of terms linked together, then they sum to zero.  If there are an odd number of terms, then the value of the sum is equal to the value of the term in the linked set that is on the top level of the diagram.  So let us rewrite our diagram again, writing down only the top level of the previous diagram with numbers to indicate how many other numbers it is linked to:
$$\xymatrix{
*+[F]{4} & *+[o][F]{1} & *+[F]{2} & *+[o][F]{1} & 3 & 1\ar[l] & *+[F]{2} & *+[o][F]{1}
}$$
Here the boxed terms represent sums that cancel out completely, due to an even number of terms.  The arrow between the 1 and 3 represents that we want the relation $f(30(8m+5)+17):=-f(30(8m+4)+17)$ in order to cause the set of 3 terms in the middle to sum to zero.  The circled terms represent terms of the form $f(30(8m+j)+17)$, $j=1,3,7$ that we have not yet specified any values for.

However, this is still not enough and we must double the length of our interval 3 more times, now looking at an interval of length $64\cdot 30=1920$ to obtain the following diagram:
$$\xymatrix@C=10pt{
{7} & {1}\ar[l] & *+[F]{2} & *+[o][F]{1} & 3 & 1\ar[l] & *+[F]{2} & *+[o][F]{1} & *+[F]{4} & *+[o][F]{1} & *+[F]{2} & *+[o][F]{1} & 3 & 1\ar[l] & *+[F]{2} & *+[o][F]{1} \\
{5} & {1}\ar[l] & *+[F]{2} & *+[o][F]{1} & 3 & 1\ar[l] & *+[F]{2} & *+[o][F]{1} & *+[F]{4} & *+[o][F]{1} & *+[F]{2} & *+[o][F]{1} & 3 & 1\ar[l] & *+[F]{2} & *+[o][F]{1} \\
*+[F]{6} & *+[o][F]{1} & *+[F]{2} & *+[o][F]{1} & 3 & 1\ar[l] & *+[F]{2} & *+[o][F]{1} & *+[F]{4} & *+[o][F]{1} & *+[F]{2} & *+[o][F]{1} & 3 & 1\ar[l] & *+[F]{2} & *+[o][F]{1} \\
{5} & {1}\ar[l] & *+[F]{2} & *+[o][F]{1} & 3 & 1\ar[l] & *+[F]{2} & *+[o][F]{1} & *+[F]{4} & *+[o][F]{1} & *+[F]{2} & *+[o][F]{1} & 3 & 1\ar[l] & *+[F]{2} & *+[o][F]{1} \\
}$$

The arrows between the 1 and 5 corresponds to a relation $f(30(32m+17)+17):=-f(30(32m+16)+17)$, and the arrow between the 1 and 7 corresponds to a relation $f(30(64m+1)+17):=-f(30(64m)+17)$.

The circled $1$'s correspond to values $f(30(64m+j)+17)$, $j=3$, $7$, $9$, $11$, $15$, $19$, $23$, $25$, $27$, $31$, $33$, $35$, $39$, $41$, $45$, $47$, $51$, $55$, $57$, $59$, $63$, all of which we have not specified any values for.  Let us index these values of $j$, calling them $j_i$ with $j_1=3$, $j_2=7$ and so on.

Now if we consider the sum $\sum_{n=1}^{64\cdot 30} f(30(64m)+n)$  We see that all terms where the power of $2$ that divides the argument is at most $64$ sum to zero, except for the terms $f(30(64+j_i)+17)$. The terms divisible by $128$ have arguments of the form $l_i:=30(64m)+128i$ for $1 \le i \le 16$.  To make the sum in question equal to zero, we set $$f(30(64m+j_i)+17):=-f(l_i) \qquad \text{for }1 \le i \le 16$$ and then let $$f(30(64m+j_i)+17):=(-1)^i\qquad \text{for }16 \le i \le 21$$

If all the values of $f(30(64m+j)+k)$ with $0 \le m < N$, $0 \le j < 64 m$, and $k=1,7,11,13,17,19,23,29$ satisfy all the relations given above, then we can assign the values of $f(30(64N+j)+k)$ so that they also satisfy all the relations given above.  As noted above, with our particular value of $f(2)$, the values of $f(30(64\cdot 0 +j)+k)$ can be chosen to satisfy all the relations, and hence, we can define all the values of $f(30m+k)$ inductively.

With these values, we see that $$\sum_{n \le 30\cdot 64 m} f(n) = 0$$ and hence $$\sum_{n\le x} f(n) = O(1).$$

\subsection{The general case}
To generate a function with these properties, we require the use of several intermediate steps.  Let $F_{k-1},F_{k-2},\ldots,F_1 \subset \mathbb{N}$ be non-empty sets whose elements are relatively prime to $P$, $R_{k-1},R_{k-2},\ldots,R_1 \subset \mathbb{N}^2$, and $b_{k-1},b_{k-2},\ldots,b_1\in \mathbb{N}$ that all satisfy the following conditions:
\begin{enumerate}\renewcommand{\theenumi}{\Roman{enumi}}

\item $F_{j-1} \subset F_j$, and if $n \in F_j$, then $n+b_j \in F_j$ and (provided $n>2 b_j$) $n-b_j \in F_j$;
\item If $(x_1,y),(x_2,y) \in R_j$, then $x_1 = x_2$;
\item $R_{j-1}\supset R_j$, and if $(x,y)\in R_{j-1}\setminus R_j$, then $y \in F_j \setminus F_{j-1}$ and $\lfloor \frac{x}{b_{j-1}} \rfloor = \lfloor \frac{y}{b_{j-1}} \rfloor$;
\item For $1 \le j \le k-2$, $b_j = p_j^{a_j} b_{j+1}$ for some $a_j \in \mathbb{N}$, and $b_{k-1}=P$;
\item Suppose $f(n)$ is any arithmetic function such that $f(p_in)=f(p_i)f(n)$ for $j \le i \le k$ and $f(x)=-f(y)$ for $(x,y)\in R_j$, and let $$S_{N,j} : = \sum_{\substack{n < N \\ p_i \nmid n \text{ for }1 \le i < j \\ n \not\in F_j}} f(n).$$  Then $S_{b_j (M+1),j}-S_{b_jM+1,j} = 0$ for all $M \in \mathbb{N}$.
\end{enumerate}

$R_j$ represents a set of $R$elations that we expect $f$ to satisfy.  $F_j$ is a set of $F$ree elements that we can use to build new relations from.  $b_j$ is a useful modulus that we work with.

We start with the construction of $R_{k-1},F_{k-1}$ by letting $P'= \prod_{1 \le i \le k-2} p_i$.  Then over any interval $I_M:=[b_{k-1}M+1,b_{k-1}(M+1)]$, $\frac{\phi(P')}{P'}P = \phi(P')p_{k-1}p_k$ of the numbers in $I_M$ are relatively prime to $P'$. $\phi(P)$ of the numbers in $I_M$ are relatively prime to all the $p_k$.  Since $\left(1- \frac{1}{p_{k-1}} \right)\left(1- \frac{1}{p_{k}} \right) \ge\left(1- \frac{1}{3} \right)\left(1- \frac{1}{5} \right) >\frac{1}{2}$, there are more numbers relatively prime to $P$ in $I_M$ than there are relatively prime to $P'$ but divisible by $p_{k-1}$ or $p_k$ in $I_M$.

Suppose $f(n)$ is an arithmetic function such that $f(p_i n) = f(p_i) f_k(n)$ for $i = k-1$ or $k$ and consider $$\sum_{\substack{n \in I_M \\ p_i \nmid n \text{ for }1 \le i < j}} f(n).$$  Looking at the terms, we see that some $f(n)$ are ``fixed'' because $n$ is divisible by some $p_i$ with $k-2 < i \le k$, and their value - and hence the value of $f(n)$ - is predetermined by earlier terms.  The rest are ``free'' and we can specify relations $(x,y)\in I_M^2$ to be placed in $R_{k-1}$ with the $x$ from the fixed values and the $y$ from the free values.  By the previous paragraph, we have more free values in such an interval than fixed so all the fixed values can be canceled in this way.  This gives $S_{P (M+1),k-1}-S_{PM+1,k-1} = 0$, if $M\ge1$ for any function that satisfies these new relations.

To be more specific, let $$U_M:=\{n \in I_M|(n,P)=1   \}$$ and $$V_M:=\{n\in I_M | (n,P')=1, \ p_k|n \text{ or } p_{k-1}|n  \},$$ with elements $u_i$ and $v_i$, respectively, arranged in ascending order.  Let $F_M^{(0)}=\{u_i|i\le |V_M|\}$ and $R_M^{(0)}=\{(v_i,u_i)|i\le |V_M|\}.$  Finally, let $F_{k-1}=\bigcup_{M=1}^\infty U_M\setminus F_M^{(0)}$ and $R_{k-1}=\bigcup_{M=1}^\infty R_M^{(0)}$.  These sets satisfy conditions $(I)-(V)$ with $b_{k-1}=P$.

This shows that there exists at least one choice of $F_{k-1}$, and $R_{k-1}$, with $b_{k-1}=P$.  The following lemma will imply that we have at least one choice of $F_1$, $R_1$, and $b_1$ as well.

\begin{lem}
If $F_j$, $R_j$, $b_j$, $j>1$ are as above, then there exists $F_{j-1}$, $R_{j-1}$, and $b_{j-1}=p_{j-1}^{a_{j-1}} b_j$, that satisfy the above requirements as well.
\end{lem}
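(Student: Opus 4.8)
The plan is to run, in the abstract setting of conditions (I)--(V), the same doubling argument used for $k=3$, with $p_{j-1}$ now playing the role that $2$ played there. I would set $b_{j-1}=p_{j-1}^{a_{j-1}}b_j$, leaving the exponent $a_{j-1}$ unspecified until the final counting forces a lower bound on it, and I would build $F_{j-1}$ by deleting suitable elements from $F_j$ and build $R_{j-1}$ by adjoining new pairs to $R_j$. Since $b_{j-1}$ is a multiple of $b_j$, a block $B=[b_{j-1}M+1,b_{j-1}(M+1)]$ is a disjoint union of $p_{j-1}^{a_{j-1}}$ consecutive blocks of length $b_j$, and this nesting is what the whole argument exploits.

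The analytic core is an exact evaluation of one such block sum. I would write each $n$ contributing to $S_{b_{j-1}(M+1),j-1}-S_{b_{j-1}M+1,j-1}$ as $n=p_{j-1}^t n'$ with $p_{j-1}\nmid n'$ and group by $t$. The key observation is that for fixed $t$ the function $g(m):=f(p_{j-1}^t m)$ again satisfies multiplicativity at $p_j,\dots,p_k$ and the relations of $R_j$ (because $f(p_{j-1}^t x)=-f(p_{j-1}^t y)$ whenever $f(x)=-f(y)$), so condition (V) at level $j$ applies to $g$ on every length-$b_j$ sub-block the $t$-th layer meets. For $t\le a_{j-1}$ the layer is a union of whole length-$b_j$ blocks, so all terms with $n'\notin F_j$ cancel and only the free contributions $f(p_{j-1}^t y)$, $y\in F_j$, remain; the layers $t> a_{j-1}$ produce the ``deep'' terms that survive uncancelled. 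Isolating the $t=0$ layer, where the exclusion is $n'\notin F_{j-1}$ rather than $n'\notin F_j$, I would obtain, valid for every admissible $f$,
$$S_{b_{j-1}(M+1),j-1}-S_{b_{j-1}M+1,j-1}=\sum_{y\in(F_j\setminus F_{j-1})\cap B}f(y)\;+\;\sum_{t=1}^{a_{j-1}}\ \sum_{\substack{y\in F_j\\ p_{j-1}^t y\in B}}f(p_{j-1}^t y)\;+\;\sum_{\substack{n\in B,\ p_{j-1}^{a_{j-1}+1}\mid n\\ p_i\nmid n\ (i\le j-2)}}f(n).$$

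It then remains to choose which elements of $F_j$ to delete (they become the second coordinates of the new relations) and how to pair the surviving integers so that this right-hand side vanishes identically in $f$. Every surviving term is $f(z)$ with $z\in B$, so condition (III) is automatic as long as I only ever pair integers lying in the common block $B$, and condition (II) just says each deleted $y$ is used as a second coordinate at most once. I would organize the pairing hierarchically, exactly as in the $k=3$ diagrams: relations defined on a lower block propagate under multiplication by $p_{j-1}$ to annihilate the layers $t\ge1$, and the scale-$0$ free elements that survive to the top of the resulting tree are matched against the deep terms, with any genuine leftover free elements retained in $F_{j-1}$.

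The main obstacle, and the step I expect to absorb almost all of the work, is the bookkeeping that makes this pairing close up: one must check that in each block there are always at least as many available free elements as there are fixed and deep terms needing a partner, while still leaving $F_{j-1}$ nonempty and dense enough to re-run the argument at the next level. This is a counting estimate of the same flavor as the inequality $(1-\frac1{p_{k-1}})(1-\frac1{p_k})>\frac12$ that launched the base case, and it is precisely here that $a_{j-1}$ must be taken large --- large enough that the geometric gain from the many layers $1\le t\le a_{j-1}$ dominates the bounded number of deep terms, just as $64\cdot 30$ rather than a smaller multiple of $30$ was required when $k=3$. Once $a_{j-1}$ is fixed this way, conditions (I) and (IV) hold by construction and by the $b_j$-periodicity already present in $F_j$, while condition (V) is exactly the vanishing of the displayed sum.
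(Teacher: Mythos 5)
Your analytic reduction is exactly the paper's: the paper also splits the block sum by the power of $p_{j-1}$ dividing $n$, applies condition (V) at level $j$ to each layer $p_{j-1}^s\|n$ with $s\le a_{j-1}$ (using that $f(p_{j-1}^s n')=f(p_{j-1})^sf(n')$ and that the rescaled range is a union of whole length-$b_j$ blocks), and arrives at the same three residual sums you display --- the freed elements of $F_j$, the layer terms $f(p_{j-1}^t y)$ with $y\in F_j$, and the deep terms divisible by $p_{j-1}^{a_{j-1}+1}$. That part of your proposal is correct and is carried out in the same way in the paper.

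However, everything after your displayed identity is deferred, and that deferred part is the actual content of the lemma; as written there is a genuine gap, and one of your phrases suggests a misconception about how it must be filled. The layer terms $f(p_{j-1}^t y)$ with $y\in F_j$ are \emph{not} free --- their values equal $f(p_{j-1})^t f(y)$ and are forced once $f(y)$ is chosen --- so they cannot simply be ``matched against'' anything; by condition (III) every new relation must consume a genuinely free $y\in F_j\setminus F_{j-1}$ as its second coordinate. The paper's device is to pair $p_{j-1}(b_jm+t)$ with the free element $p_{j-1}b_jm+t$ (free because it is congruent to $t$ modulo $b_j$, by condition (I)); propagating these relations by multiplicativity at $p_{j-1}$ produces telescoping chains $f(p_{j-1}^sb_jm+t)+f(p_{j-1}(p_{j-1}^{s-1}b_jm+t))+\cdots+f(p_{j-1}^s(b_jm+t))$ which vanish when $s$ is odd but leave a single uncancelled value when $s$ is even, and a \emph{second} family of relations is needed to kill those even-$s$ residues against further free elements $(p_{j-1}^sm+1)b_j+t$. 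Your ``tree'' picture does not separate the odd and even chains, and without that parity analysis you cannot identify which free elements remain available for the deep terms. Two further checks you wave at must actually be done: the counting inequality $B\sum_{r=0}^{a_{j-1}-1}(-1)^rp_{j-1}^{a_{j-1}-1-r}>b_j/p_{j-1}$ (with $B=|[b_j+1,2b_j]\cap F_j|$), which is what defines $a_{j-1}$ and guarantees enough leftover free elements per block to absorb the roughly $b_j/p_{j-1}$ deep terms; and the verification that every argument of a chain lies in the same interval $I_M$ (the paper checks $(p_{j-1}^s-1)t<p_{j-1}^sb_j-t$), without which your claim that condition (III) is ``automatic'' fails.
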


\begin{proof}

First, define $B:=|[b_j+1,2b_j]\cap F_j|$.

Then, let $a_{j-1}$ be large enough so that $$B \sum_{r=0}^{a_{j-1}-1} (-1)^r p_{j-1}^{a_{j-1}-1-r} > \frac{b_{j}}{p_{j-1}}.$$

Again, consider the intervals $I_M:=[b_{j-1}M+1,b_{j-1}(M+1)]$ starting with $I_1$ and then, in turn, $I_2$, $I_3$, and so on.  For each $I_M$, we work in three steps, creating new sets $F_M^{(1)}, F_M^{(2)}, F_M^{(3)}$ and $R_M^{(1)}, R_M^{(2)}, R_M^{(3)}$.

\begin{enumerate}

\item Consider all terms of the form $p_{j-1}b_jm+t \in I_M \cap F_j$ with $0 < t < b_j$.  Let $F_M^{(1)}$ be the set that consists of all such $p_{j-1}b_jm+t$.  Let $R_M^{(1)}$ be the set that consists of all $(p_{j-1}(b_jm+t),p_{j-1}b_jm+t)$.

 \item Consider all $$n' = p_{j-1}^sb_jm+t \in I_M \cap F_j$$ with $0 < t < b_j$, such that either $1\le s < a_{j-1}$ and $(m,p_{j-1})=1$ or else $s=a_{j-1}$.  Let $F_M^{(2)}$ be the set that consists of all such $(p_{j-1}^s m+1) b_j +t$ with $s$ even.  And let $R_M^{(2)}$ be the set that consists of all $(p_{j-1}^sb_jm+t,(p_{j-1}^s m+1) b_j +t)$ again with $s$ even.

\item Let $$V=\left\{v_i:=b_{j-1}M+ip_{j-1}^{a_{j-1}+1}\quad | \quad 1< i\le b_j/p_{j-1}  \right\}$$ denote the set of $n \in I_M$ for which $p_{j-1}^{a_{j-1}+1} | n$.
    The number of $n' \in I_M \cap F_j$ in the previous step with $s=a_{j-1}-i$ is, for each $t$, equal to $(p_{j-1}-1)p_{j-1}^{i-1}$ if $0 < i < a_{j-1}$ and at least $1$ if $i=0$.  There are exactly $B$ distinct $t$. So, the number of terms in $I_M\cap F_j \setminus F_M^{(1)}\setminus F_M^{(2)}$ is at least the number of $n'$ with $s$ odd, and this is at least $$B\sum_{r=0}^{a_{j-1}-1} (-1)^r p_{j-1}^{a_{j-1}-1-r}.$$

Hence, we have, for $a_{j-1}$ as defined at the start of the proof, that $$|I_M \cap F_j \setminus F_M^{(1)} \setminus F_M^{(2)}| > |V|.$$  Now, let $u_1 < u_2 < \ldots < u_{|V|}$ be the $|V|$ smallest terms in $I_M\cap F_{j-1}\setminus F_M^{(1)} \setminus F_M^{(2)}$.  Let $F_M^{(3)}$ be the set consisting of all the $u_i$'s.  And let $R_M^{(3)}$ be the set consisting of the elements $(v_i,u_i)$ for $1 \le i \le |V|$.

\end{enumerate}

Now let $F_{j-1}=F_j \setminus \bigcup_{M=1}^\infty (F_M^{(1)} \cup F_M^{(2)} \cup F_M^{(3)})$ and $R_{j-1}=R_j\cup \bigcup_{M=1}^\infty (R_M^{(1)} \cup R_M^{(2)} \cup R_M^{(3)})$.

It is clear, by construction, that the $F_{j-1}$, $R_{j-1}$ produced in this way will satisfy conditions (I) through (IV).  It remains to check condition (V).

Now let $f$ be any arithmetic function that satisfies $f(p_in)=f(p_i)f(n)$ if $j \le i \le k$, and $f(x)+f(y)=0$ if $(x,y)\in R_{j-1}$.

Consider the sum $$T=\sum_{\substack{n \in I_M \\ p_i \nmid n \text{ for }1 \le i < j-1 }} f(n),$$ which, by (V) for $F_j,$ $R_j$, equals $$\sum_{\substack{n \in I_M \\n \in F_{j}}} f(n)+\sum_{\substack{n \in I_M \\ p_{j-1}|n}} f(n).$$

Fix an $s$ such that $1 \le s\le a_{j-1}$.  Consider the set $J_M^s$ of terms $n\in I_M$ such that $p_{j-1}^s||n$, $n/p_{j-1}^s=b_j m +t \not\in F_j$ and are coprime to $p_1 p_2 \cdots p_{j-1}$.  Then $J_M^s/p_{j-1}^s$ consists of the set of numbers coprime to $p_1 p_2 \cdots p_{j-1}$ that are contained in $$[(b_{j-1}M+1)/p_{j-1}^s,b_{j-1}(M+1)/p_{j-1}^s] \setminus F_j,$$ which equals $$[p_{j-1}^{a_{j-1}-s} b_jM+1,p_{j-1}^{a_{j-1}-s} b_j(M+1)]\setminus F_j$$ since we only consider the integer points within intervals.

Thus $$\sum_{n\in J_M^s} f(n) = f(p_{j-1})^s \sum_{n\in J_M^s/p_{j-1}^s} f(n).$$ By condition (V), however, the latter sum must be 0.  So, $$T=\sum_{\substack{n \in I_M \\ n\in F_j }} f(n)+\sum_{\substack{n \in I_M \\ p_{j-1}^{a_{j-1}+1}|n}} f(n)+ \sum_{\substack{ n\in I_M \\ p_{j-1}^s||n \qquad 1\le s \le a_{j-1}\\ n/p_{j-1}^s \in F_j}} f(n).$$

Now consider an $n=p_{j-1}^{\tilde{s}}(b_j\tilde{m}+t)\in I_M$ with $1\le \tilde{s}\le a_{j-1}$ and $b_j\tilde{m}+t \in F_j$, a term from the third sum above.  Let $n'=p_{j-1}^{\tilde{s}}b_j\tilde{m}+t=p_{j-1}^{s}b_jm+t$ where either $s \le a_{j-1}$ and $p_{j-1}\nmid m$ or else $s=a_{j-1}$; that is, $n'$ has the same form as the elements in step (2) of the algorithm above.

Then since $(p_{j-1}(p_{j-1}^{s-1}b_jm+t),p_{j-1}^sb_jm+t) \in R_{j-1}$, we have $f(p_{j-1}^s b_j m +t) + f(p_{j-1}(p_{j-1}^{s-1} b_j m+t))=0$.

Note that $p_{j-1}^{s-1} b_j m+t \equiv n \pmod{b_j}$ and $p_{j-1}^{s-1} b_j m+t>b_j$ so $p_{j-1}^{s-1} b_j m+t \in F_j$ as well by condition (I).  Thus $(p_{j-1}(p_{j-1}^{s-2}b_jm+t),p_{j-1}^{s-1}b_jm+t) \in R_{j-1}$ and $f(p_{j-1}^{s-1} b_j m +t) + f(p_{j-1}(p_{j-1}^{s-2} b_j m+t))=0$.

Continuing in this way, we see that the sum $$f(p_{j-1}^s b_j m +t) + f(p_{j-1}(p_{j-1}^{s-1} b_j m+t)) +\cdots + f(p_{j-1}^s(b_jm+t))\qquad (*)$$ equals $0$ if $s$ is odd and $f(n')$ if $s$ is even.  The term $n=p_{j-1}^{\tilde{s}}(b_j\tilde{m}+t)$ must appear as one of the arguments a sum of the form $(*)$.

The first multiple of $p_{j-1}^{a_{j-1}}b_j$ after $p_{j-1}^s b_j m +t$ must occur at least $p_{j-1}^sb_j -t$ integers later.  However, $$p_{j-1}^s(b_jm+t)-p_{j-1}^s b_j m +t=(p_{j-1}^s-1)t<(p_{j-1}^s-1)b_j<p_{j-1}^sb_j-t.$$  Since the right endpoints of the intervals $I_M$ are all multiples of $p_{j-1}^{a_{j-1}}b_j$, this implies that every argument in a sum of the form $(*)$ is in an interval $I_M$ (for some $M$).

Thus, for every $n=p_{j-1}^{\tilde{s}}(b_j\tilde{m}+t)\in I_M$ with $1\le \tilde{s}\le a_{j-1}$ and $b_jm+t \in F_j$ we can find the corresponding $n'$ and find a sum of the form $(*)$ that contains it as an argument.  Each $p_{j-1}b_jm+t\in F_M^{(1)}$ appears as the argument of the leading term in such a sum because $p_{j-1}(b_jm+t)$ takes the form $n=p_{j-1}^{\tilde{s}}(b_j\tilde{m}+t)\in I_M$.

At this point, by using the evaluation of the sum $(*)$, we have \begin{align*}T=&\sum_{\substack{n \in I_M \\ n\in F_{j-1}\cup F_M^{(2)}\cup F_M^{(3)} }} f(n) +\sum_{\substack{n \in I_M \\ p_{j-1}^{a_{j-1}+1}|n}} f(n) \\ &+  \sum_{\substack{ n'\in I_M \\ n'=p_{j-1}^sb_jm+t\text{ as in step (2)}}} f(n').\end{align*}

But by the construction of $R_M^{(2)}$, all terms in the first sum where $n \in F_M^{(2)}$ cancel with all the terms in the third sum, we therefore have $$T=\sum_{\substack{n \in I_M \cap F_{j-1} }} f(n)+\sum_{\substack{n \in I_M \\ p_{j-1}^{a_{j-1}+1}|n }} f(n)+\sum_{\substack{n \in F_M^{(3)}}} f(n).$$  And by the construction of $R_M^{(3)}$, all terms in the second and third sum cancel, so we have that $$T=\sum_{n \in I_M \cap F_{j-1}} f(n)$$ and hence $S_{P (M+1),k-1}-S_{PM+1,k-1} = 0$.

\end{proof}

By iterating this lemma, we eventually arrive at $F_1$, $R_1$, and $b_1$.  Now suppose $F_1={a_1,a_2,a_3,\ldots}$ with $a_i < a_{i+1}$ and let $f(n)$ be any function such that $f(p_in)=f(p_i)f(n)$ if $1 \le i \le k$, and that $f(x)=-f(y)$ if $(x,y)\in R_1$ or $(x,y)=(a_i,a_{i+1})$, $i \in \mathbb{N}$.  Then

\begin{align*}S(N)&=\sum_{n \le N} f(n)\\ &=O(1)+\sum_{\substack{n \le N \\ n \in F_1 \text{ or } n \le b_1}} f(n) \\&= O(1)\end{align*}

Such a function exists for all possible choices of $f(n)$, $1 \le n \le P$.

This completes the proof of the theorem.

\end{document}